\newtheorem{Theorem}{Theorem}[section]
\newtheorem{Corollary}[Theorem]{Corollary}
\newtheorem{Example}[Theorem]{Example}
\newtheorem{Remark}[Theorem]{Remark}
\newenvironment{Proof*}{{\it Proof.}}
\newcommand{\NN}{\mathbb{N}}
\newcommand{\BB}{\mathbb{B}}
\newcommand{\Nil}{\mathcal{N}}
\newcommand{\ZZ}{\mathbb{Z}}
\begin{document}

\title{Semirings generated by idempotents}

\author{David Dol\v zan}

\address{D.~Dol\v zan:~Department of Mathematics, Faculty of Mathematics
and Physics, University of Ljubljana, Jadranska 19, SI-1000 Ljubljana, Slovenia, and Institute of Mathematics, Physics and Mechanics, Jadranska 19, SI-1000 Ljubljana, Slovenia; e-mail: 
david.dolzan@fmf.uni-lj.si}
\bigskip

\begin{abstract} 
 We prove that a semiring multiplicatively generated by its idempotents is commutative and Boolean, if every idempotent in the semiring has an orthogonal complement. We prove that a semiring additively generated by its idempotents is commutative, if every idempotent in the semiring has an orthogonal complement and all the nilpotents in the semirings are central. We also provide examples that the assumptions on the existence of orthogonal complements of idempotents and the centrality of nilpotents cannot be omitted.
\end{abstract}

\subjclass[2010]{} 
\keywords{Semiring, idempotent, nilpotent}
\thanks{The author acknowledges the financial support from the Slovenian Research Agency  (research core funding No. P1-0222)}

\maketitle 
%\parindent=0cm

%-----------------------------------------------------
%-----------------------------------------------------
 \section{Introduction}
%-----------------------------------------------------
%-----------------------------------------------------

\bigskip

The study of idempotents has always been an important tool in algebra and in particular, it has proved to be useful in studying Artinian and finite rings. Idempotents dominate the structure theory of rings. In 1966, Steger \cite{steger} checked the conditions of diagonability of idempotent matrices over commutative rings with identity. Much of the work has been done to examine the properties of idempotents in connection with the structure of rings, see for example \cite{camillo, dube, nicholson}. It is well known that idempotents induce direct sum decompositions of rings (Peirce decompositions) which in turn determine the structure of rings, provided that the rings have enough idempotents. For some recent results in this area see \cite{anh} and \cite{anh1}. Much research has also been devoted to the properties of rings with either small or large number of idempotents - see \cite{chen, chin}. Since the exact number of idempotents in a ring can be difficult to obtain in general, some authors have tried to establish the number of idempotents in some special classes of rings (\cite{kanwar, mittal, weiss}), while others have tried to find bounds for the number of idempotents in a ring (\cite{cheraghpour, dolzan21, machale}).

A \emph{semiring} is a set $S$ equipped with binary operations $+$ and $\cdot$ such that $(S,+)$ is a commutative monoid 
with identity element 0, and $(S,\cdot)$ is a monoid with identity element 1. In addition, operations $+$ and $\cdot$
are connected by distributivity and 0 annihilates $S$. A semiring is \emph{commutative} if $ab=ba$ for all $a,b \in S$.
The theory of semirings has many applications in optimization theory, automatic control, models of discrete event networks and graph theory (see e.g. \cite{baccelli, cuninghame, li2014, zhao2010}).  For an extensive theory of semirings, we refer the reader to \cite{hebisch}. There are many natural examples of commutative semirings, for example, the set of nonnegative integers (or reals) with the usual operations of addition and multiplication. Other examples include distributive lattices, tropical semirings, dio\"{\i}ds, fuzzy algebras, inclines and bottleneck algebras. 

A lot of study has been devoted to properties of rings and algebras that are multiplicatively generated by idempotents (\cite{chuang, putcha}) or idempotents and nilpotents (\cite{abu}). Recently, the authors in \cite{chen2021} have also studied rings, which are additively generated by idempotents and nilpotents. As far as the author of this paper is aware, these properties have not yet been studied in the semiring setting. 

This is therefore the aim of this paper, which is organized as follows. The next section contains the necessary definitions and preliminaries. In Section 3, we study semirings generated by idempotents. Theorem 1 in \cite{putcha} states that every ring with identity that is multiplicatively generated by its idempotents is Boolean (and thus also commutative). We show (see Example \ref{notinsemi}) that this is not true in the semiring setting. We do however, prove that any semiring that is multiplicatively generated by idempotents with the additional property that for every idempotent $e \in S$ there exists an orthogonal complement (that is, an idempotent $f \in S$ such that $e+f=1$ and $ef=fe=0$), is Boolean and commutative (see Theorem \ref{main}). Furthermore, we loosen these additional assumptions somewhat in Corollary \ref{main2}, where we examine the case where  for every idempotent $e \in S$ there exist a nilidempotent $f$ and a nilpotent $x$ such that $e+f=1+x$, and we prove that the semiring is Boolean and commutative in this case as well.
We then examine semirings multiplicatively generated by nilidempotents (see Theorem \ref{mainnilid}) and prove that under certain conditions on the nilpotent elements, the semiring has to be commutative (but not necessarily Boolean, as Example \ref{notboolean} shows).
Next, we examine semirings that are additively generated by idempotents. We prove (see Theorem \ref{additivecom}) that every semiring that is additively generated by idempotents, where every idempotent has an orthogonal complement and all the nilpotents are contained in the centre, is commutative and give examples that neither of these last two assumptions can be omitted (see Example \ref{opomba1}).
Finally, we provide two examples (Example \ref{additively}) that semirings that are additively generated by idempotents are in general not Boolean, even under all the assumptions of Theorem \ref{additivecom}.

\bigskip

%-----------------------------------------------------
%-----------------------------------------------------
\section{Definitions and preliminaries}
%-----------------------------------------------------
%-----------------------------------------------------

\bigskip

%A semiring $S$ is called \emph{integral} if $a+b=1$ and $ab=ba=0$, implies that $a,b \in \{0,1\}$ and it is called \emph{antinegative} or \emph{zero-sum-free}, if $a+b=0$ implies that $a=b=0$. Antinegative semirings are also called \emph{antirings}. 
The simplest example of a semiring  is the binary Boolean semiring, the set $\{0,1\}$ in which addition and multiplication are the same as in $\ZZ$ except that $1+1=1$. We shall denote the binary Boolean semiring by $\BB$.

%We shall denote the diagonal matrix with elements $d_1,d_2, \ldots, d_n \in S$ along the diagonal with $\diag(d_1,d_2,\ldots,d_n)$. For a permutation $\sigma \in S_n$, the permutation matrix $P_\sigma$ will denote the $0/1$ matrix where the $(i,j)$-th entry of $P_\sigma$ is equal to $1$ if and only if $i = \sigma(j)$. We shall often denote the $(i,j)$-th entry of matrix $A$ simply by $A_{ij}$.

An element $e \in S$ is called \emph{idempotent} if $e^2=e$. An idempotent $e$ is nontrivial if $e \notin \{0,1\}$. An element $x \in S$ is called \emph{nilpotent} if there exists an integer $k$ such that $x^k=0$. Furthermore, $e$ is called \emph{nilidempotent} if $e^2=e+x$ for some nilpotent element $x$.

Let us  denote the set of all multiplicatively invertible elements in $S$ by $S^*$, the set of all additively invertible elements in $S$ by $V(S)$ and the set of all idempotent elements in $S$ by $I(S)$.
Furthermore, let $\Nil(S)$ denote the set of all nilpotent elements in $S$ and let $Z(S)$ denote the centre of $S$. 
We say that semiring $S$ is \emph{Boolean} if $I(S)=S$.

 If $e$ and $f$ are idempotents, we say that $e$ and $f$ are \emph{mutually orthogonal} if $ef=fe=0$. If $e$ and $f$ are nilidempotents, we say that $e$ and $f$ are \emph{mutually nilorthogonal} if $ef, fe \in \Nil(S)$.
Choose $b \in S$. A set $\{a_1,a_2,\ldots,a_r\} \subseteq S$ of nonzero mutually orthogonal idempotents is called an \emph{orthogonal decomposition of $b$, (of length $r$)} if $a_1+a_2+\ldots+a_r=b$ and similarly a set $\{a_1,a_2,\ldots,a_r\} \subseteq S$ of nonzero mutually nilorthogonal nilidempotents is called an \emph{nilorthogonal decomposition of b, (of length $r$)} if $a_1+a_2+\ldots+a_r=b$.  Furthermore, we say that the idempotent $e$ has an \emph{orthogonal complement} if there exists an idempotent $f$ such that $\{e, f\}$ is an orthogonal decomposition of $1$; and the nilidempotent $e$ has a \emph{nilorthogonal complement} if there exist a nilidempotent $f$ and a nilpotent $x$ such that $\{e, f\}$ is a nilorthogonal decomposition of $1+x$.

For a semiring $S$, we shall denote the set of all $n$ by $n$ matrices with entries in $S$ by $M_n(S)$ and the set of all $n$ by $n$ upper triangular matrices with entries in $S$ by $T_n(S)$. We shall denote the identity matrix in $M_n(S)$ by $I$.

\bigskip

%-----------------------------------------------------
%-----------------------------------------------------
\section{Semirings generated by idempotents}
%-----------------------------------------------------
%-----------------------------------------------------

\bigskip

This is the main section of this paper, where we study the semirings that are either multiplicatively or additively generated by idempotents. Firstly, we turn our attention to the semirings that are multiplicatively generated by idempotents. 

As the following example shows, a semiring can be multiplicatively generated by its idempotents and yet be neither commutative nor Boolean, in contrast to the situation in the ring setting (compare \cite[Theorem 1]{putcha}).

\bigskip

\begin{Example}
\label{notinsemi}
Let $S=T_2(\BB)$ where $\BB$ is the binary Boolean semiring. Denote $A =  \left[
 \begin{matrix}
 0 & 1 \\
 0 & 0 
 \end{matrix}
 \right] \in S$. 
Note that $I(S)=S \setminus \{ A \}$. Observe that $A = \left[
 \begin{matrix}
 1 & 0 \\
 0 & 0 
 \end{matrix}
 \right]  \left[
 \begin{matrix}
 0 & 1 \\
 0 & 1 
 \end{matrix}
 \right] $, so $S$ is multiplicatively generated by its idempotents. However, $S$ is neither commutative nor Boolean.
\end{Example}

\bigskip

This encourages us to look for some additional property of a semiring, such that the fact that it is multiplicatively generated by its idempotents would then imply that it is Boolean and commutative. Hence, we prove the next theorem. 

\bigskip

\begin{Theorem}
\label{main}
Let $S$ be a semiring multiplicatively generated by idempotents and suppose that every idempotent in $S$ has an orthogonal complement. Then $S$ is commutative and Boolean.
\end{Theorem}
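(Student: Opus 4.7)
The plan is to show that any two idempotents of $S$ commute. Once this is proved, both conclusions follow at once: since $S$ is multiplicatively generated by idempotents, pairwise commutativity of idempotents propagates to commutativity of all of $S$; and since $(ef)^2 = e^2 f^2 = ef$ for commuting idempotents $e,f$, every product of idempotents is itself an idempotent, so $S = I(S)$ is Boolean.

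First, orthogonal complements (when they exist) are unique: if $e + f = 1 = e + f'$ with $e, f, f'$ pairwise orthogonal idempotents, then $f = f \cdot 1 = f(e + f') = f f'$ and symmetrically $f' = f f'$, so $f = f'$. Fix now $e, g \in I(S)$ with complements $\bar{e}, \bar{g}$. Substituting $1 = e + \bar{e}$ into $g = 1 \cdot g \cdot 1$ and expanding by distributivity gives the Peirce-type decomposition $g = ege + eg\bar{e} + \bar{e} g e + \bar{e} g \bar{e}$, whence
\[
 eg = ege + eg\bar{e}, \qquad ge = ege + \bar{e} g e.
\]
So $eg = ge$ will follow from the two relations $eg\bar{e} = 0$ and $\bar{e} g e = 0$. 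The second is just the first applied to $(\bar{e}, g)$ in place of $(e, g)$, using $\overline{\bar{e}} = e$ by uniqueness. The entire proof therefore reduces to the single base claim that $eg\bar{e} = 0$ for every pair of complemented idempotents $e, g$.

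Set $u := eg\bar{e}$ and $v := e\bar{g}\bar{e}$. Three structural facts are immediate from $e\bar{e} = \bar{e} e = 0$ and $g + \bar{g} = 1$:
\begin{itemize}
\item[(i)] $u^2 = eg(\bar{e} e) g \bar{e} = 0$, so $u \in \Nil(S)$ is a square-zero nilpotent;
\item[(ii)] $u + v = e(g + \bar{g})\bar{e} = e\bar{e} = 0$, so $u \in V(S)$ with additive inverse $v$;
\item[(iii)] $h := e + u$ is an idempotent — $eu = u$, $ue = 0$ and $u^2 = 0$ give $h^2 = e + u = h$ — and by hypothesis $h$ has a unique orthogonal complement in $I(S)$, which direct computation identifies as $\bar{e} + v$.
\end{itemize}

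The heart of the proof — and the step I expect to be the main obstacle — is extracting $u = 0$ from (i)--(iii). The most promising route compares the two orthogonal decompositions of $1$, namely $\{e, \bar{e}\}$ and $\{h, \bar{h}\} = \{e + u,\, \bar{e} + v\}$, and exploits the one-sided asymmetry $\bar{e} h = 0$ but $h \bar{e} = u$: since $h$ and $\bar{h}$ are themselves complemented idempotents, iterating the Peirce construction with the new decomposition and repeatedly invoking the uniqueness of complements should corner the square-zero nilpotent $u$ into a position in which it must vanish. An alternative approach is to establish the reducedness statement $V(S) \cap \Nil(S) = \{0\}$ directly from the complement hypothesis, in which case (i) and (ii) give $u = 0$ at once. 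Either way, once $u = 0$ is proved, the reduction above yields pairwise commutativity of idempotents, and hence $S$ is commutative and Boolean.
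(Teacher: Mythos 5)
Your reduction to the single claim $eg\bar e=0$ is sound, and the computations (i)--(iii) are all correct; but the proof stops exactly at the point where the real work happens. You never actually derive $u=0$ -- you only describe two hoped-for routes. The first ("iterating the Peirce construction... should corner the nilpotent") is not an argument. The second, proving $V(S)\cap\Nil(S)=\{0\}$ \emph{directly from the complement hypothesis}, is false as stated: in $\ZZ_4$ every idempotent ($0$ and $1$) has an orthogonal complement, yet $2\in V(S)\cap\Nil(S)$. The missing ingredient is that the hypothesis ``$S$ is multiplicatively generated by idempotents'' must be used \emph{at this step}, not only in the final propagation: the paper's key observation is that in such a semiring, any element with a right inverse equals $1$ (write it as $h_1h_2\cdots h_k$ with $h_i\in I(S)$; then $h_1(h_1\cdots h_k)=h_1\cdots h_k$, and right-cancelling by the right inverse gives $h_1=1$, then inductively $h_i=1$ for all $i$). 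The paper applies this to $1+exf$ for \emph{arbitrary} $x\in S$, after constructing a right inverse by hand from the orthogonal complement $g$ of the idempotent $e+exf$ (one checks $(1+exf)(e+g)=1$), concluding $1+exf=1$ and then $exf=e(1+exf)f=ef=0$.

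The irony is that your own data (i)--(iii) put you one line away from closing the gap along the same lines: since $u+v=0$ and $uv=vu=0$, you have $(1+u)(1+v)=1+(u+v)+uv=1$, so $1+u$ is invertible; the generated-by-idempotents argument above then forces $1+u=1$, and sandwiching, $u=e(1+u)\bar e=e\bar e=0$. Had you written that down, your proof would be complete and essentially parallel to the paper's (slightly less general, since the paper kills $exf$ for all $x\in S$ rather than only for idempotent $x$, but that generality is not needed for the theorem). As submitted, however, the central claim is asserted rather than proved, so the proposal has a genuine gap.
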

\begin{proof}
Choose $x \in S$ and $e \in I(S)$. By the assumption, there exists $f \in I(S)$ such that $e+f=1$ and $ef=fe=0$.
Observe that $(e+exf)^2=e+exf$, so $e+exf \in I(S)$ and there exists $g \in I(S)$ such that 
\begin{equation}
\label{eq1}
e+exf+g=1
\end{equation} 
and $(e+exf)g=g(e+exf)=0$.
Note that multiplying equation (\ref{eq1}) with $f$ from the left side yields $fg=f$.

Now denote $t=(1+exf)(e+g)=e+g+exfg$. Since $eg+exfg=0$, we have $et=e$. Also, $fg=f$ yields $ft=f$. This implies that $t=(e+f)t=e+f=1$, so we have proved that
$1+exf$ has a right inverse in $S$. 

Since $S$ is multiplicatively generated by idempotents, we can write $1+exf=h_1 h_2 \ldots h_k$ for some $h_1, h_2, \ldots, h_k \in I(S)$. Thus $h_1(1+exf)=1+exf$ and since $1+exf$ has a right inverse, this implies that $h_1=1$. Inductively, we can then prove that $h_2=\ldots=h_k=1$ as well.
So, we can conclude that $1+exf=1$ and multiplying this with $e$ from the left and $f$ from the right we ge $exf=0$. This further implies that $ex=ex(e+f)=exe$.

By a symmetrical argument, we can prove also that $xe=exe$, so $ex=xe$. We have therefore proved that every idempotent in $S$ lies in the centre $Z(S)$. Since $S$ is generated by idempotents, this implies that $S$ is commutative and therefore also Boolean.
\end{proof}

\bigskip

\begin{Remark}
Observe that in the semiring from Example \ref{notinsemi}, one can check that the idempotent $\left[
 \begin{matrix}
 1 & 1 \\
 0 & 0 
 \end{matrix}
 \right]$ does not have an orthogonal complement. It does however, have a nilorthogonal complement, since
$\left[
 \begin{matrix}
 0 & 1 \\
 0 & 0 
 \end{matrix}
 \right]$ is a nilpotent and 
$\left[
 \begin{matrix}
 1 & 1 \\
 0 & 0 
 \end{matrix}
 \right] + \left[
 \begin{matrix}
 0 & 1 \\
 0 & 1 
 \end{matrix}
 \right] = 1 + \left[
 \begin{matrix}
 0 & 1 \\
 0 & 0 
 \end{matrix}
 \right]$ and 
$\left[ \begin{matrix}
 1 & 1 \\
 0 & 0 
 \end{matrix}
 \right]  \left[
 \begin{matrix}
 0 & 1 \\
 0 & 1 
 \end{matrix} \right], \left[ \begin{matrix}
 0 & 1 \\
 0 & 1 
 \end{matrix}
 \right] \left[
 \begin{matrix}
 1 & 1 \\
 0 & 0 
 \end{matrix}\right] \in \Nil(S)$.
\end{Remark}

\bigskip 

Let us therefore examine the semirings such that every idempotent has a nilorthogonal complement. Obviously, as Example \ref{notinsemi} shows,
this still does not suffice for the semiring that is multiplicatively generated by its idempotents to be commutative and Boolean. However, if we assume some additional properties on the set of nilpotents, we do manage to get the desired results. More precisely, we have the following corollary.
One part of its proof could be modified from the proof of Theorem 2.5 from \cite{dolzaninvert}, but the assumptions that we have here are slightly different, so we include all the details for the sake of completeness.

%We firstly need the following proposition. The proof could be extracted from proofs of Lemma 2.1, Lemma 2.2 and Theorem 2.5 from \cite{dolzaninvert}, but the assumptions here are ever so slightly different, so we include all the details for the sake of completeness.

\bigskip

\begin{Corollary}
\label{main2}
Let $S$ be a semiring multiplicatively generated by idempotents and suppose that every idempotent in $S$ has a nilorthogonal complement. 
If $\Nil(S) \subseteq V(S) \cap Z(S)$, then $S$ is commutative and Boolean.
\end{Corollary}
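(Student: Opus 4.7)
My plan is to follow the proof of Theorem \ref{main} closely, using the hypothesis $\Nil(S) \subseteq V(S) \cap Z(S)$ to absorb the nilpotent ``noise'' coming from the weaker notion of a nilorthogonal complement. The role played in Theorem \ref{main} by ``$e+f=1$, $ef=fe=0$, $f^2=f$'' is now played by ``$e+f = 1+n_0$ with central nilpotent $n_0$, $ef, fe \in \Nil(S)$, and $f$ nilidempotent''. Two preliminary observations that I would use throughout: first, expanding $e(e+f) = e(1+n_0)$ and cancelling the nilpotent (hence additively invertible) element $ef$ gives $ef = en_0$, and symmetrically $fe = n_0 e$; an analogous calculation gives $f^2 = f + m$ with $m := (f-e)n_0 \in \Nil(S)$. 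Second, for every $\nu \in \Nil(S)$ the element $1+\nu$ is multiplicatively invertible, since $(1+\nu)(1 - \nu + \nu^2 - \cdots \pm \nu^{k-1}) = 1$ as a finite sum, well-defined because $\Nil(S) \subseteq V(S)$.

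Fix $x \in S$ and $e \in I(S)$ with nilorthogonal complement $f$. Mimicking Theorem \ref{main}, I would set $y := e + exf$ and $g := f - n_0 - exf$, well-defined in $S$ because $exf$ is nilpotent, as one sees from $(exf)^2 = n_0\, exexf$ together with the centrality of $n_0$. Then $y + g = 1$ by construction, and a short computation expanding $exfg$ using $f^2 = f+m$ shows $(1+exf)(e+g) = 1 - n_0\, exexf$, which is a unit in $S$. Hence $1+exf$ has a right inverse. Writing $1 + exf = h_1 h_2 \cdots h_k$ with each $h_i \in I(S)$ and applying the right-inverse argument of Theorem \ref{main} (peel off $h_1 = 1$ from $h_1(1+exf) = 1+exf$ and iterate) yields $1 + exf = 1$ in $S$.

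The main difficulty, and the place where the weaker hypothesis of this corollary actually shows up, is squeezing $ex = xe$ out of $1 + exf = 1$. Multiplying $1 + exf = 1$ by $e$ on the left and $f$ on the right, substituting $f^2 = f+m$ and cancelling the nilpotent $ef$, gives only $(1+n_0)\, exf = n_0\, exe$, not $exf = 0$. To get around this I would run the entire construction in the mirrored form, with $y' := e + fxe$ and $g' := f - n_0 - fxe$: the analogous calculation shows $(e+g')(1+fxe) = 1 - n_0\, fxexe$, so this time $1+fxe$ has a \emph{left} inverse, and the same peel-off argument gives $1 + fxe = 1$ and hence $(1+n_0)\, fxe = n_0\, exe$. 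Invertibility of $1+n_0$ now forces $exf = fxe$, and combining the two identities $ex(e+f) = exe + exf$ and $(e+f)xe = exe + fxe$ with $e+f = 1+n_0$ collapses to $(1+n_0)\, ex = (1+n_0)\, xe$; cancelling $1+n_0$ yields $ex = xe$.

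Thus every idempotent of $S$ is central, so $S$, being multiplicatively generated by idempotents, is commutative. Then any product $h_1 h_2 \cdots h_k$ of idempotents satisfies $(h_1 h_2 \cdots h_k)^2 = h_1^2 h_2^2 \cdots h_k^2 = h_1 h_2 \cdots h_k$, so $S$ is Boolean.
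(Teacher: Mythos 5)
Your overall plan (re-running the proof of Theorem \ref{main} and using $\Nil(S)\subseteq V(S)\cap Z(S)$ to absorb the nilpotent error terms) is sound, but the proof as written breaks at its very first step. The identities $ef=en_0$, $fe=n_0e$ and $f^2=f+(f-e)n_0$ are obtained by additive cancellation from equations such as $e+ef=e+en_0$, and this cancellation is not available: a semiring has no additive cancellation, and the element blocking the cancellation here is $e$ (resp.\ $f$), which is not assumed to lie in $V(S)$. ``Cancelling the nilpotent $ef$'' from $e+ef=e+en_0$ only yields $e=e+en_0-ef$, not $ef=en_0$. These three identities are load-bearing in everything that follows: you use $fe=n_0e$ to get the nilpotency of $exf$ via $(exf)^2=n_0\,exexf$, you use them again in the computation $(1+exf)(e+g)=1-n_0\,exexf$, and you use the specific form $m=(f-e)n_0$ to land on $(1+n_0)exf=n_0\,exe$ at the end. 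So the argument does not go through as stated.

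The gap is repairable, and in a way that keeps your (genuinely different) endgame. The nilpotency of $exf$ needs no unproved identity: since $fe\in\Nil(S)\subseteq Z(S)$, one has $(exf)^{j}=(fe)^{j-1}ex^{j}f$, which vanishes for large $j$; then $exf\in\Nil(S)\subseteq V(S)\cap Z(S)$, so $1+exf$ is already a unit by your own geometric-series observation, and the whole $e+g$ construction is unnecessary. Moreover, the definition of a nilidempotent already gives $f^2=f+z$ for some $z\in\Nil(S)\subseteq Z(S)$; multiplying $1+exf=1$ by $e$ on the left and $f$ on the right and cancelling the additively invertible $ef$ gives $exf=-z\,ex$, and the mirrored computation gives $fxe=-z\,xe$, whence $exe=(1+n_0+z)ex=(1+n_0+z)xe$ and $ex=xe$ after cancelling the unit $1+n_0+z$. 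For comparison, the paper avoids all of this noise-tracking: it first upgrades the nilorthogonal complement to an honest orthogonal complement (correcting the nilidempotent to an idempotent by an explicit iteration, then using centrality to get $ef=(ef)^2=0$, $fe=0$, and $e+f=1+x$ with $1+x$ an invertible idempotent, hence $e+f=1$) and then simply invokes Theorem \ref{main}. Either route works, but the cancellation steps you wrote must be replaced along one of these lines.
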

\begin{proof}
Choose $e \in I(S)$. By the assumption, there exist a nilidempotent $g$ and $y \in \Nil(S)$ such that $e+g=1+y$ and $eg,ge\in \Nil(S)$. 
We firstly prove that there exists an idempotent $f$ such that $f=g+n$ for some $n \in \Nil(S)$. 
Since $g$ is a nilidempotent, we have $g^2=g+z$ for some $z \in \Nil(S)$. If $z=0$ then $f=g$ is an idempotent and we are done. Suppose therefore that 
$z \neq 0$. Denote $z_1=z$, $g_1=g$ and $k=1$. We now proceed inductively on $k$. We have $g_k^2=g_k+z_k$. Since $z_k \in V(S)$, we can define $w_{k+1}=z_k+2g_k(-z_k) \in \Nil(S)$ and $g_{k+1}=g_k+w_{k+1}$. Since $w_{k+1} \in Z(S)$, we have that $g_{k+1}^2=g_k^2+2g_kw_{k+1}+w_{k+1}^2$ and since $z_k \in Z(S)$ we further have $g_{k+1}^2=g_k+z_k+2g_kz_k+4(g_k+z_k)(-z_k)+z_k^2+4g_k(-z_k^2)+4(g_k+z_k)z_k^2=g_{k+1}+4z_k^3+3(-z_k^2)$. Denote $z_{k+1}=4z_k^3+3(-z_k^2)=-z_k^2(3-4z_k) \in \Nil(S)$, so $g_{k+1}^2=g_{k+1}+z_{k+1}$. Observe also that $z_{k+1}=z^{2^k}s$ for some $s \in S$. Since $z \in \Nil(S)$, there exists $k \in \NN$ such that 
$z_{k+1}=0$ and therefore $g_{k+1}$ is an idempotent. So, $f=g_{k+1}=g+\sum_{t=2}^{k+1}{w_t}$ and $\sum_{t=2}^{k+1}{w_t} \in \Nil(S)$. 

Observe that $eg, ge \in \Nil(S)$ also implies $ef, fe \in \Nil(S)$. 
The fact that $\Nil(S) \subseteq V(S)$ further implies that $e+f=1+x$ for some $x \in \Nil(S)$ with $ef,fe \in \Nil(S)$.
Since $\Nil(S) \subseteq Z(S)$, this implies $ef=e(ef)f=(ef)^2$, so $ef=0$. Similarly we prove $fe=0$. Observe that this implies that $(1+x)^2=1+x$.
Since $x \in \Nil(S)$, there exists $n \in \NN$ such that $x^n=0$ and since $\Nil(S) \subseteq V(S)$, we have $(1+x)(1-x)(1+x^2)\ldots(1+x^{2^{k-1}})=1-x^{2^{k}}=1$ for $k \geq \log_2(n)$, so $1+x \in S^*$. This now gives us $1+x=1$, so $e+f=1$, so we have proved that $e$ has an orthogonal complement.
The result now follows from Theorem \ref{main}.
\end{proof}

\bigskip

We now loosen the condition that the semiring is multiplicatively generated by idempotents and assume that the semiring is multiplicatively generated by nilidempotents. We have the following theorem.

\bigskip

\begin{Theorem}
\label{mainnilid}
Let $S$ be a semiring multiplicatively generated by nilidempotents and suppose that every idempotent in $S$ has a nilorthogonal complement. 
If $\Nil(S) \subseteq V(S) \cap Z(S)$, then $S$ is commutative.
\end{Theorem}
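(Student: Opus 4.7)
The plan is to mirror Theorem \ref{main}, absorbing the nilpotent slack via the lifting technique from the proof of Corollary \ref{main2}. First, the proof of Corollary \ref{main2} in fact shows (without using its ``multiplicatively generated by idempotents'' hypothesis) that under our assumptions every $e \in I(S)$ has a genuine orthogonal complement: lift the nilorthogonal complement $g$ of $e$ to an idempotent $f = g + n$ with $n \in \Nil(S)$ via the Newton-style iteration, then use centrality and additive-invertibility of nilpotents exactly as in the last part of the proof of Corollary \ref{main2} to upgrade $ef, fe \in \Nil(S)$ and $e+f \in 1+\Nil(S)$ to $ef = fe = 0$ and $e + f = 1$. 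With this in hand, fix $e \in I(S)$, $x \in S$, and let $f$ be the orthogonal complement of $e$. The first half of the proof of Theorem \ref{main} then carries over verbatim, producing an idempotent $g$ with $(e + exf) + g = 1$, $(e+exf)g = g(e+exf) = 0$, and in particular a right inverse $r := e + g$ of $1 + exf$.

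The new ingredient is the factorization step. Write $1 + exf = h_1 h_2 \cdots h_k$ with each $h_i$ a nilidempotent, say $h_i^2 = h_i + z_i$ with $z_i \in \Nil(S)$. The central inductive claim is that each $h_i$ lies in $1 + \Nil(S)$. For $h_1$, compute
\[
h_1(1 + exf) \;=\; h_1^2\, h_2 \cdots h_k \;=\; (1+exf) + z_1 h_2 \cdots h_k
\]
and right-multiply by $r$ to obtain $h_1 = 1 + z_1 h_2 \cdots h_k r$; centrality of $z_1$ makes the error nilpotent, so $h_1 = 1+\nu_1$ for some $\nu_1 \in \Nil(S)$. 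To continue, observe that $1 + \nu_1$ is a central unit (inverse via the alternating series, finite by nilpotency), so that peeling it off the equation $(1 + \nu_1)(h_2 \cdots h_k) r = 1$ produces a right inverse $r(1+\nu_1)$ of $h_2 \cdots h_k$; the same computation applied to $h_2$ then gives $h_2 \in 1 + \Nil(S)$, and so on. Using centrality and closure of $\Nil(S)$ under addition and multiplication, the resulting product collapses to $1 + exf = 1 + \nu$ for some $\nu \in \Nil(S)$.

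To finish, multiply $1 + exf = 1 + \nu$ on the left by $e$ and on the right by $f$: using $ef = 0$ and centrality of $\nu$, both the $e \cdot 1 \cdot f$ terms and $e \nu f = \nu e f$ vanish, leaving $exf = 0$. Hence $ex = ex(e+f) = exe$, and a symmetric argument using the idempotent $e + fxe$ gives $xe = exe$, so $e$ is central. Since every nilidempotent is an idempotent plus a central nilpotent it too is central, and $S$ is commutative because it is multiplicatively generated by nilidempotents. The main obstacle is the inductive claim in the middle paragraph: Theorem \ref{main}'s argument forces each $h_i = 1$ exactly, but here only $h_i \in 1 + \Nil(S)$ can be extracted, and keeping the tail $h_{i+1} \cdots h_k$ right-invertible through the induction crucially uses both halves of $\Nil(S) \subseteq V(S) \cap Z(S)$ — $Z(S)$ to commute $1+\nu_i$ past the tail and $V(S)$ to make $1 + \nu_i$ a unit in the first place.
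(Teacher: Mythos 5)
Your proof is correct and follows the paper's overall route: upgrade nilorthogonal complements to orthogonal ones via the argument of Corollary \ref{main2}, reuse the first half of Theorem \ref{main} to get a right inverse of $1+exf$, analyze the factorization into nilidempotents to conclude $1+exf\in 1+\Nil(S)$, deduce $exf=0$ and the centrality of idempotents, and finally transfer centrality to nilidempotents. The one place where you genuinely diverge is the factorization step. The paper first lifts each nilidempotent factor to an idempotent plus a central nilpotent, uses centrality of $\Nil(S)$ to collect all the error terms into a single nilpotent $n$, so that $1+exf=h_1\cdots h_k+n$ with the $h_i$ honest idempotents, then shows $u=h_1\cdots h_k\in S^*$ and reruns the ``$h_1u=u$ forces $h_1=1$'' induction from Theorem \ref{main} to get $u=1$. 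You instead keep the nilidempotent factors and peel them off one at a time, using $h_i^2=h_i+z_i$ together with the right inverse of the remaining tail to show $h_i\in 1+\Nil(S)$ directly; the identity $h_1=1+z_1h_2\cdots h_kr$ and the unit $1+\nu_1$ are verified correctly, and both halves of $\Nil(S)\subseteq V(S)\cap Z(S)$ are used exactly where you say they are. Your variant buys a small simplification: it avoids lifting every factor separately and avoids having to argue that the sum of the (not individually commuting) cross terms in the expanded product is nilpotent, which the paper asserts without comment. The paper's version buys direct reuse of the already-proved induction from Theorem \ref{main}. Both are valid; the final step (central nilpotents plus central idempotents make nilidempotents central, using $V(S)$ to cancel) matches the paper's implicit closing argument.
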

\begin{proof}
In the same way as in the proof of Corollary \ref{main2}, we prove that every idempotent has an orthogonal complement. This means that we can proceed in the same way as in the first part of the proof of Theorem \ref{main} to prove that $1+exf \in S^*$ for every $e \in I(S)$ and every $x \in S$ (where $f$ denotes the orthogonal complement of $e$). The assumption that $S$ is multiplicatively generated by nilidempotents now yields $1+exf=(h_1+n_1)(h_2+n_2) \ldots (h_k+n_k)$ for some idempotents $h_1,h_2,\ldots,h_k$ and some nilpotents $n_1, n_2, \ldots, n_k$. Since $\Nil(S) \subseteq Z(S)$, this implies $1+exf=h_1h_2 \ldots h_k + n$ for some $n \in \Nil(S)$. Denote $u=h_1h_2 \ldots h_k$ and observe that we can write $u=1+exf-n$ since $n \in V(S)$. Furthermore, denote $v=1+exf \in S^*$ and $n' = v^{-1}n \in \Nil(S)$ and note that $u=v(1+n')$. Since $n' \in \Nil(S)$, we have $n'^r=0$ for some integer $r$ and the fact that $n' \in V(S)$ now yields $1+n' \in S^*$. Therefore, we have proved that $u  \in S^*$. Now, we can proceed similarly as in the last part of the proof of Theorem \ref{main} to prove that $u=1+exf-n=1$. Multiplying this with $e$ from the left and $f$ from the right side, together with the fact that $\Nil(S) \subseteq Z(S)$ gives us $exf=0$ and consequently $ex=exe$. Similarly we prove that $xe=exe$, so $I(S) \subseteq Z(S)$. Since $S$ is multiplicatively generated by nilidempotents, we have proved that $S$ is commutative. 
\end{proof}

\bigskip

\begin{Example}
\label{notboolean}
It does not necessarily follow that the semiring $S$ from Theorem \ref{mainnilid} is also Boolean. Consider two examples:
\begin{enumerate}
\item
ring $S=\ZZ_2[x]/(x^2)$, and
\item semiring $\BB[x,y]$, where all its elements are additively idempotent and $x+y=xy=yx=x^2=y^2=0$. 
\end{enumerate}
Both examples can be seen to satisfy all the conditions of Theorem \ref{mainnilid}, but none of the (semi)rings in question is Boolean.
\end{Example}

\bigskip

\begin{Remark}
\label{opomba}
In all the above cases (Theorem \ref{main}, Corollary \ref{main2} and Theorem \ref{mainnilid}), we have proved that the semiring in question is commutative. Since we have proved that every idempotent also has an orthogonal complement, this implies that in the case that $S$ is a finite semiring, we can use the Peirce decomposition to decompose the semiring to the direct product of semirings, $S \simeq S_1 \times S_2 \times \ldots S_k$ where each of the semirings $S_i$ has no nontrivial idempotents. In the case of Theorem \ref{main} and Corollary \ref{main2}, this further implies that either $S_i \simeq \BB$ or $S_i \simeq \ZZ_2$ for every $i$.
\end{Remark}

\bigskip

Next, let us briefly examine semirings that are additively generated by their idempotents.

\bigskip

\begin{Theorem}
\label{additivecom}
Let $S$ be a semiring additively generated by idempotents and suppose that every idempotent in $S$ has an orthogonal complement. 
If $\Nil(S) \subseteq Z(S)$, then $S$ is commutative.
\end{Theorem}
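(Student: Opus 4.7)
The plan is to mimic the strategy of Theorem \ref{main} in spirit, by showing that every idempotent lies in the centre $Z(S)$, and then using the fact that $S$ is additively generated by its idempotents together with the closure of $Z(S)$ under addition to conclude commutativity.

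Fix $e\in I(S)$ and let $f\in I(S)$ be its orthogonal complement, so $e+f=1$ and $ef=fe=0$. For any $x\in S$, the decomposition $x=(e+f)x(e+f)$ gives $ex=exe+exf$ and $xe=exe+fxe$, so it suffices to prove $exf=0$ and $fxe=0$. Since $S$ is additively generated by idempotents, by distributivity it is enough to verify these equalities when $x=h$ is itself an idempotent.

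The key observation is that for any idempotent $h$, the element $ehf$ is nilpotent: indeed, $(ehf)^2=eh(fe)hf=0$ because $fe=0$. By hypothesis $\Nil(S)\subseteq Z(S)$, so $ehf$ commutes with every element of $S$. Applying this to the element $f$ gives
\begin{equation*}
ehf=(ehf)f=f(ehf)=(fe)hf=0,
\end{equation*}
where the first equality uses $f^2=f$, the second uses centrality of $ehf$, and the last uses $fe=0$. An entirely symmetric argument with $(fhe)^2=0$ and centrality applied against $e$ shows that $fhe=0$ as well.

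Thus for every idempotent $h$ we have $ehf=fhe=0$, and by additive distributivity the same holds with $h$ replaced by any finite sum of idempotents, i.e. by any $x\in S$. Consequently $ex=exe=xe$ for every $x\in S$, so $I(S)\subseteq Z(S)$. Since $Z(S)$ is closed under addition and $S$ is additively generated by $I(S)$, we conclude $S=Z(S)$, i.e. $S$ is commutative. I do not expect a serious obstacle here; the only subtle point is the two-line computation $ehf=(ehf)f=f(ehf)=0$, which is precisely the place where the hypothesis $\Nil(S)\subseteq Z(S)$ is consumed and which has no obvious substitute.
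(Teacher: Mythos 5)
Your proof is correct and is essentially the paper's own argument: the paper likewise observes that $exf$ is nilpotent (hence central by hypothesis) and concludes $exf=(exf)f=f(exf)=(fe)xf=0$, giving $ex=exe=xe$ and $I(S)\subseteq Z(S)$. The only difference is that you first reduce to idempotent $x=h$ and then extend by additivity, which is a harmless detour since $(exf)^2=ex(fe)xf=0$ already holds for arbitrary $x\in S$.
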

\begin{proof}
Suppose $e \in I(S)$ and choose $x \in S$. There exists $f \in I(S)$ such that $e+f=1$ and $ef=fe=0$. But since $exf \in \Nil(S) \subseteq Z(S)$, we have
$exf=0$. This implies that $ex=exe$ and similarly we prove $xe=exe$, therefore $e \in Z(S)$. Since $S$ is additively generated by idempotents, $S$ is commutative. 
\end{proof}

\bigskip

\begin{Example}
\label{opomba1}
\begin{enumerate}
\item
Obviously, the assumption $\Nil(S) \subseteq Z(S)$ is crucial here. It can be readily verified that the non-commutative ring $M_2(\ZZ_2)$ is additively generated by idempotents.  

\item
Also, the assumption that every idempotent in $S$ has an orthogonal complement cannot be omitted. Consider for example the semiring of polynomials with non-negative integer coefficients over two non-commuting variables, $S=\NN\langle x,y \rangle=\{a+bx+cy; a,b,c \in \NN\}$, where $x^2=x,y^2=y,xy=x$ and $yx=y$. It can be easily verified that $S$ is a non-commutative semiring that is additively generated by idempotents and that $\Nil(S)=\{0\} \subseteq Z(S)$.
\end{enumerate}
\end{Example}

\bigskip

Finally, we give some examples that in case semiring $S$ is additively generated by
 its idempotents, we cannot draw the conclusion that $S$ is Boolean, even when all the other assumptions from Theorem \ref{additivecom} are satisfied.

\bigskip

\begin{Example}
\label{additively}
\begin{enumerate}
\item
The semiring $\NN$ of all nonnegative integers is additively generated by its idempotents and satisfies all the conditions of Theorem \ref{additivecom}, but $\NN$ is not Boolean.
\item
The same can be said in the case $S=\ZZ_3[x]/(x^2-1)$. Note that here, $S$ is even a finite commutative ring that is additively generated by idempotents (and it also contains nontrivial idempotents).
\end{enumerate}
\end{Example}

\bigskip
\bigskip


\begin{thebibliography}{References}

	
\bibitem{abu}
H. Abu-Khuzam, \emph{A note on rings which are multiplicatively generated by idempotents and nilpotents}, Internat. J. Math. \& Math. Sci.
11(1) (1988), 5--8.

\bibitem{anh}
P. N. Anh, G. F. Birkenmeier, L. van Wyk, \emph{Peirce decompositions, idempotents and rings}, J. Algebra 564 (2020), 247--275.

\bibitem{anh1}
P. N. Anh, G. F. Birkenmeier, L. van Wyk, \emph{Idempotents and structures of rings}, Linear Multilinear Algebra, 64 (2016), 2002--2029.

\bibitem{baccelli}
 F. Baccelli, J. Mairesse, \emph{Ergodic theorems for stochastic operators and discrete event networks}, Idempotency, Cambridge University Press (1998), 171--208. 

\bibitem{camillo}
V. P. Camillo, H.-P. Yu, \emph{Stable range one for rings with many idempotents}, Trans. Amer. Math. Soc. 347(8) (1995), 3141--3147. 

\bibitem{cheraghpour}
H. Cheraghpour,  N. M. Ghosseiri, \emph{On the idempotents, nilpotents, units and zero-divisors of finite rings}, Linear and Multilinear Algebra 67:2 (2019), 327--336.

\bibitem{chen}
H. Chen, \emph{Rings with many idempotents}, Internat. J. Math \and Math. Sci. 22 (1999), 547--558.

\bibitem{chen2021}
H. Chen, M.S. Abdolyousefi, \emph{Rings additively generated by idempotents and nilpotents}, Comm. Algebra, 49:4 (2021), 1781--1787.


\bibitem{chin}
A. Y. M. Chin, \emph{Finite Rings of Odd Order with Few Nilpotent and Idempotent Elements}, The American Mathematical Monthly 125:6 (2018), 545--548.

\bibitem{chuang}
C.-L. Chuang, P.-H. Lee, \emph{Rings Generated Multiplicatively by Idempotents}, Chinese Journal of Mathematics 11(3) (1983), 217--219.

\bibitem{cuninghame}   
R. A. Cuninghame-Green, \emph{Minimax algebra}, volume 166, Springer Science \& Business Media, 2012.

\bibitem{dolzan21}
D. Dol{\v z}an, \emph{Bounds for the number of idempotents in finite rings}, Comm. Algebra, 49:11 (2021), 4800--4807.

\bibitem{dolzaninvert}
D. Dol{\v z}an, \emph{Invertible matrices over a class of semirings}, to appear in J. Algebra its Appl.

\bibitem{dube}
T. Dube, M. Ghirati, S. Nazari et al., \emph{Rings in which idempotents generate maximal or minimal ideals}, Algebra Univers. 81 (2020), https://doi.org/10.1007/s00012-020-00660-y.

\bibitem{hebisch}
U. Hebisch, H. J. Weinert, \emph{Semirings: algebraic theory and applications in computer science}, Series in Algebra, vol. 5 (World Scientific Publishing Co., Inc., River Edge, NJ, 1998.)

\bibitem{kanwar}
P. Kanwar, M. Khatkar, R. K. Sharma, \emph{Idempotents and units of matrix rings over polynomial rings}, International Electronic Journal of Algebra 22 (2017), 147--169.

\bibitem{kawai}
H. Kawai, \emph{Algebras generated by idempotents and commutative group algebras over a ring}, Comm. Algebra 30, no.1 (2002), 119--128..

\bibitem{li2014}
P. Li: \emph{A heuristic method to compute the approximate postinverses of a fuzzy matrix}, IEEE Transactions on Fuzzy Systems, 22 (2014), 1347--1351.

\bibitem{machale}
D. MacHale, \emph{Idempotents in finite rings}, Proc. R. Ir. Acad. 82A(1) (1982), 9--12.

\bibitem{mittal}
G. Mittal, K. Singla, \emph{On The Properties of Idempotents of the Matrix Ring $M_3(Z_n[x])$}, International Journal of Mathematics Trends and Technology (IJMTT) 53(4) (2018), 254--258.

\bibitem{nicholson}
W. K. Nicholson, \emph{Lifting idempotents and exchange rings}, Trans. Amer. Math. Soc. 229 (1977), 269--278. 


\bibitem{putcha}
M. S. Putcha, A. Yaqub, \emph{Rings which are multiplicatively generated by idempotents}, Comm. Algebra, 8:2 (1980), 153--159.

\bibitem{steger}
A. Steger, \emph{Diagonability of idempotent matrices}, Pac. J. Math. 19(3) (1966), 535--542. 

\bibitem{weiss}
A. Weiss, \emph{Idempotents in group rings}, J. Pure and Applied Algebra 16(2) (1980), 207--213.

\bibitem{zhao2010} 
S. Zhao, X. Wang, \emph{Invertible matrices and semilinear spaces over commutative semirings}, Information Sciences 180 (2010), 5115--5124.


\end{thebibliography}
\end{document}